\documentclass[a4paper]{amsart}

\usepackage{amssymb,latexsym}
\usepackage{amsmath}
\usepackage{amsfonts}
\usepackage{amsthm}
\usepackage{amssymb, comment}
\usepackage{hyperref}

\theoremstyle{plain}
\newtheorem{theorem}{Theorem}

\def\Q{\mathbb{Q}}

\theoremstyle{definition}
\newtheorem{definition}{Definition}

\title[Seven squares from three numbers]
{Seven squares from three numbers}
\begin{document}

\date{}


\author[A. Dujella]{Andrej Dujella}
\address{
Department of Mathematics\\
Faculty of Science\\
University of Zagreb\\
Bijeni{\v c}ka cesta 30, 10000 Zagreb, Croatia
}
\email[A. Dujella]{duje@math.hr}

\author[M. Kazalicki]{Matija Kazalicki}
\address{
Department of Mathematics\\
Faculty of Science\\
University of Zagreb\\
Bijeni{\v c}ka cesta 30, 10000 Zagreb, Croatia
}
\email[M. Kazalicki]{matija.kazalicki@math.hr}

\author[V. Petri\v{c}evi\'c]{Vinko Petri\v{c}evi\'c}
\address{
Faculty of Electrical Engineering, Computer Science and Information Technology Osijek\\
Josip Juraj Strossmayer University of Osijek\\
Kneza Trpimira 2B, HR-31000 Osijek, Croatia
}
\email[V. Petri\v{c}evi\'c]{vinko.petricevic@ferit.hr}

\begin{abstract}
	We study triples \(\{a,b,c\}\) of distinct nonzero rational numbers such that
	\[
	a+1,\quad b+1,\quad c+1,\quad ab+1,\quad ac+1,\quad bc+1,\quad abc+1
	\]
	are all perfect squares. We prove that there exist infinitely many such triples. In contrast, we show that no triple of positive integers has this property.
\end{abstract}

\subjclass[2020]{Primary 11D09; Secondary 11G05}
\keywords{Diophantine triples; rational Diophantine tuples; regular Diophantine tuples; elliptic
	curves}

\maketitle

\section{Introduction}
A Diophantine \(m\)-tuple is a set of \(m\) distinct positive integers with the property that the product of any two distinct elements, increased by \(1\), is a perfect square. The first known example is Fermat's quadruple
\[
\{1,3,8,120\}.
\]
More generally, one may consider rational Diophantine \(m\)-tuples, defined in the same way for sets of nonzero rational numbers. An example already known to Diophantus is
\[
\left\{\frac{1}{16},\,\frac{33}{16},\,\frac{17}{4},\,\frac{105}{16}\right\}.
\]
Euler proved that there exist infinitely many rational Diophantine quintuples \cite{Hea}; in particular, he extended Fermat's quadruple to
\[
\left\{1,3,8,120,\frac{777480}{8288641}\right\}.
\]
Stoll later proved that this extension is unique \cite{Stoll}.

In the integral case, Baker and Davenport proved that if \(\{1,3,8,d\}\) is a Diophantine quadruple, then \(d=120\) \cite{B-D}. This led to the conjecture that there are no Diophantine quintuples in integers, which was recently proved by He, Togb\'e, and Ziegler \cite{HTZ}; see also \cite{duje-crelle}.

For rational Diophantine tuples, the maximal size is still unknown. Gibbs found the first rational Diophantine sextuple \cite{Gibbs1},
\[
\left\{\frac{11}{192},\,\frac{35}{192},\,\frac{155}{27},\,\frac{512}{27},\,\frac{1235}{48},\,\frac{180873}{16}\right\}.
\]
Since then, several constructions of infinitely many rational Diophantine sextuples have been found. In particular, Dujella, Kazalicki, Miki\'c, and Szikszai \cite{DKMS}, Dujella and Kazalicki \cite{Duje-Matija}, and Dujella, Kazalicki, and Petri\v{c}evi\'c \cite{DKP-sext,DKP-reg} constructed different infinite families of rational Diophantine sextuples, with \cite{Duje-Matija} also building on ideas of Piezas \cite{P}. No example of a rational Diophantine septuple is currently known. Lang's conjecture implies that the size of a rational Diophantine tuple is bounded by an absolute constant; see the introduction of \cite{DKMS}. For more background, see \cite{Duje-notices,duje-book}.

A closely related problem was considered by Dujella and Szalay \cite{DujellaSzalay}. They studied triples of positive integers \(a,b,c>1\) such that
\[
ab+1,\quad ac+1,\quad bc+1,\quad abc+1
\]
are all perfect squares, and proved that there exist infinitely many such triples.

This problem, attributed to John Gowland, appeared in the sci.math newsgroup in 1998 and was later featured in Brown's book \cite{Brown}. Their result also resolves a conjecture from Kenta Takahashi's 2010 thesis, which states that there exist infinitely many integer quadruples with the property that the product of any three of their elements, increased by one, is a perfect square.

In this paper, we study triples \(\{a,b,c\}\) of distinct nonzero rational numbers such that
\begin{equation}\label{eq:maincondition}
	a+1,\quad b+1,\quad c+1,\quad ab+1,\quad ac+1,\quad bc+1,\quad abc+1
\end{equation}
are all perfect squares. We call such triples $\textit{exotic}$ Diophantine triples. Equivalently, \(\{1,a,b,c\}\) is a rational Diophantine quadruple for which \(abc+1\) is also a perfect square. Our main results are the following.

\begin{theorem}\label{thm:1}
	There exist infinitely many exotic rational Diophantine triples.
\end{theorem}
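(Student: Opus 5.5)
We build a parametric family of rational Diophantine quadruples $\{1,a,b,c\}$ and then impose the single extra condition that $abc+1$ be a square. This condition should cut out an elliptic curve of positive rank over $\Q(t)$, or over $\Q$ on a suitable specialization.

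\textbf{Step 1: parametrize the pairs and triples containing $1$.} The pair $\{1,a\}$ is Diophantine exactly when $a=r^2-1$. For the triple, take $b$ to be a regular extension of $\{1,a\}$, namely $b=a+1\pm 2r=(r\pm1)^2-1$. Then $b+1=(r\pm1)^2$, and $ab+1=(r^2\pm r-1)^2$. This gives a one-parameter family of triples $\{1,a,b\}$.

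\textbf{Step 2: choose $c$.} We make $\{1,a,b,c\}$ a regular Diophantine quadruple via the standard formula
\[
c=1+a+b+2ab\pm 2\sqrt{(a+1)(b+1)(ab+1)} .
\]
Its square root is rational here, so $c$ is a rational function of $r$. Automatically $c+1$, $ac+1$ and $bc+1$ are all squares. The remaining condition $abc+1=\square$ then becomes a single equation $y^2=F(r)$.

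\textbf{Step 3: the condition $abc+1=\square$.} A one-parameter family is probably too rigid, because $F$ will have high degree. So we free one more parameter: instead of the regular $b$, use the general triple extension of $\{1,a\}$. This is $b=(s^2-1)$ subject to $ab+1=\square$, which is a conic in $s$ with a known rational point ($s=r\pm1$). Parametrizing that conic by a parameter $u$ gives $a,b$ in terms of $(r,u)$. With the regular $c$ from Step 2, the condition $abc+1=\square$ becomes $y^2=G(r,u)$, a quartic in one of the variables (say $u$) over $\Q(r)$.

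We then look for a rational point on this quartic, for instance from a degenerate choice such as $c=0$, $b=a$ or $u$ at infinity, where $abc+1=1$. That point makes the curve elliptic over $\Q(r)$. Next we find a second point, or show that a point coming from a nondegenerate triple has infinite order. By specialization (Silverman) this holds at a specific $r$, which we can check by showing the point is not torsion, e.g.\ via Mazur's bound or non-integrality of multiples (Nagell--Lutz). Its multiples then give infinitely many $u$.

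\textbf{Step 4: exclude degenerate members.} We must discard the finitely many parameters for which $a,b,c$ are not distinct or nonzero, or where the triple degenerates. Each such condition is a proper Zariski-closed condition on the curve, so it removes only finitely many of the infinitely many points.

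The main obstacle is Step 3: finding a parametrization in which $abc+1=\square$ becomes a genus-one curve with an identifiable non-torsion point rather than a higher-genus curve. The first attempt will be the degenerate-point trick on the quartic in $u$. If that fails, we instead fix $a$ at a convenient value, such as $a=3$ or $a=8$, and search numerically for a point of infinite order on the resulting elliptic curve over $\Q$.
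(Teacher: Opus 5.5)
\textbf{There is a genuine gap in Step 3.} The framework is fine: regular quadruples $\{1,a,b,c\}$ with $a=r^2-1$, $b=s^2-1$, $ab+1=t^2$, followed by the double cover $y^2=abc+1$. But Step 3 is the only step that needs a real idea, and it fails as proposed, because the condition is not a quartic in $u$.

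Parametrize the conic $t^2=(r^2-1)s^2-r^2+2$ through $(s,t)=(1,1)$ by $t-1=u(s-1)$. This gives $s=(u^2-2u+a)/(u^2-a)$, $t=(-u^2+2au-a)/(u^2-a)$ and $c=(rs+t)^2-1$; this $c$ covers both regular extensions, since $t\mapsto -t$ is another value of $u$. One then finds
\[
(u^2-a)^4(abc+1)=(u^2-a)^4+4au(u-1)(a-u)(u+r+1)\bigl((r-2)u+r(r-1)\bigr)(u+r-1)\bigl(ru+(r-2)(r+1)\bigr),
\]
which is an octic in $u$ with leading term $u^8$.

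\begin{itemize}
\item For $a=8$ this octic is $(u^2-8)^4+32u(u-1)(8-u)(u+2)(u+4)(u+6)(3u+4)$. It is already squarefree modulo $5$, so the fibre has genus $3$. It does contain the paper's triple $\{8,312/529,495/529\}$ at $u=-13$, but by Faltings it has only finitely many points.
\item For $a=3$ you get $g(u^2)$ with $g(v)=v^4-60v^3+534v^2-540v+81$, which is squarefree. This is again genus $3$.
\end{itemize}
So neither the generic fibre over $\Q(r)$ nor your fallback of fixing $a=3$ or $a=8$ produces an elliptic curve. A rational point coming from $b=0$ or $c=0$ does not make a genus-$3$ curve elliptic. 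Your one-parameter attempt in Step 2 also gives an octic in $r$, as you suspected.

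\textbf{The missing idea} is a curve in the $(r,s)$-surface along which $abc+1$ is a square identically, rather than a fibre of the double cover. The paper obtains it by additionally requiring $\{1,ab,c\}$ to be regular. By the symmetry of $r_3$, the equation $r_3(1,ab,c)=0$ is equivalent to $(ab+c-1)^2=4(abc+1)$, and at the same time makes $ab+1$ and $c+1$ squares.

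Subtracting this from $r_4(1,r^2-1,s^2-1,c)=0$ gives
\[
(rs-1)(rs+1)(r^2s^2-2s^2-2r^2+5+2c)=0 .
\]
\begin{itemize}
\item The factor $rs=\pm1$ leads to a rank-$0$ curve.
\item The factor $c=(-r^2s^2+2s^2+2r^2-5)/2$ leads to $3r^2s^2\pm 2rs-4r^2-4s^2+7=0$. This is birational to $E_2\colon y^2=x^3-111x+450$, which has rank $1$, and that yields infinitely many triples.
\end{itemize}

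In your coordinates this is the genus-one curve $rs+2t=1$ (and its sign variants). On it your double cover splits, with $abc+1=\bigl((ab+c-1)/2\bigr)^2$. This curve cuts across the fibres $r=\mathrm{const}$ rather than being one of them.
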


For integers, the situation is different.

\begin{theorem}\label{thm:2}
There are no exotic Diophantine triples in positive integers.
\end{theorem}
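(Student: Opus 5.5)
The plan is to reduce Theorem~\ref{thm:2} to a statement about regular Diophantine quadruples and then kill the regular case by an elementary squeezing argument. Let $\{a,b,c\}$ be an exotic triple of positive integers, ordered so that $a<b<c$. Then $\{1,a,b,c\}$ is an integral Diophantine quadruple, and $1$ is its smallest element.

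\textbf{Step 1: regularity.} First I would show that the quadruple is regular, that is,
\[
c=1+a+b+2ab\pm 2rst,\qquad r^2=a+1,\ s^2=b+1,\ t^2=ab+1 .
\]
I would not reprove this from scratch. Instead I would use the known regularity results for quadruples of the form $\{1,a,b,c\}$. If these do not cover the case in the required generality, I would fall back on the standard machinery. This means writing $c$ as a common solution of the simultaneous Pellian equations coming from $c+1=\square$, $ac+1=\square$ and $bc+1=\square$. One then applies the gap principle and the congruence method to obtain a lower bound for the index. A Baker-type linear-forms bound then forces the regular solution. This is the step I expect to be the main obstacle, since regularity of quadruples is not known in complete generality. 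If needed, I would exploit the special role of the element $1$ together with the extra condition $abc+1=\square$, which should make irregular extensions impossible for size reasons.

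\textbf{Step 2: parametrise the regular case.} Write $c=c_\pm=1+a+b+2ab\pm 2rst$. In this case $c_\pm+1=(rt\pm s)^2$, $ac_\pm+1=(at\pm rs)^2$ and $bc_\pm+1=(bt\pm rs... )$-type identities hold automatically. The only remaining condition is that
\[
abc_\pm+1 \text{ is a perfect square.}
\]
The pair $\{1,a\}$ with $a=r^2-1$ can itself be extended: every $b$ with $\{1,a,b\}$ a triple lies on the Pell sequence attached to $x^2-(r^2-1)\cdot$ … (the solutions of $s^2-1=b$, $t^2=ab+1$). So I would treat $abc_\pm+1$ as an explicit expression in $r,s,t$, subject to the relations $s^2=b+1$ and $t^2=ab+1$.

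\textbf{Step 3: squeeze between consecutive squares.} I would exhibit an integer polynomial expression $P=P(a,b,r,s,t)$, of roughly the shape $P\approx abt\pm(\ldots)$, such that
\[
(P-1)^2<abc_\pm+1<P^2 \quad\text{or}\quad P^2<abc_\pm+1<(P+1)^2
\]
for all admissible $a,b$ outside a finite set. To find $P$, I would compute $\sqrt{abc_\pm}$ asymptotically using $rst\approx \sqrt{a}\,b\,\sqrt{ab}\cdot$(corrections) and choose $P$ accordingly. The finitely many small exceptions would then be checked directly. Here the relevant cases are $a=3$ and $b=8$, i.e.\ the triple $\{1,3,8\}$ extended by $c=120$ or $c=0$, and the first few members of the families $\{1,3,b\}$. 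The lower-sign case $c_-$ also has to be excluded or handled, since $c_-$ may be $0$, which is not allowed, or smaller than $b$, which requires reordering.
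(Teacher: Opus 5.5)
\textbf{Gap: Step~1 is an open problem.} All the difficulty is in Step~1, and that step is not available. The claim that every integer Diophantine quadruple $\{1,a,b,c\}$ with $1<a<b<c$ is regular is a special case of the Arkin--Hoggatt--Straus regularity conjecture. That conjecture is still open; He, Togb\'e and Ziegler's theorem on quintuples does not imply it. It is known only for special pairs and families such as $\{1,3\}$ or $\{k-1,k+1\}$. Your fallback (simultaneous Pellian equations, gap principle, congruence method, linear forms in logarithms) is the machinery used in those special cases, and it does not work uniformly in $a,b$. Your plan also never uses $abc+1=\square$ before Step~3, so saying this condition ``should make irregular extensions impossible for size reasons'' is not an argument. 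As written, the proposal reduces the theorem to an open problem.

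Once regularity is granted, the rest is easy. With your $r,s,t$, the correct identities are $c_\pm+1=(t\pm rs)^2$, $ac_\pm+1=(as\pm rt)^2$ and $bc_\pm+1=(br\pm st)^2$; yours are misstated. One has $c_-<(b-a)^2/(4ab)<b$, so $c=c_+=X^2-1$ with $X=t+rs$. Then $abc+1=(t^2-1)X^2-t^2+2$ lies strictly between $(tX-2)^2$ and $(tX-1)^2$, so your Step~3 squeeze works with no exceptions.

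\textbf{The missing idea: regularity is unnecessary.} You only need the size bound $c>4ab$, which is known for every integer Diophantine quadruple $\{1,a,b,c\}$ with $1<a<b<c$; the paper quotes Lemma~14 of Dujella's absolute-bound paper. The paper then uses the identity $(c+1)(abc+1)-(ac+1)(bc+1)=c(a-1)(b-1)$. By hypothesis the following quantity is an integer:
\[
M=2\sqrt{ab+1}\,\Bigl(\sqrt{(c+1)(abc+1)}-\sqrt{(ac+1)(bc+1)}\Bigr)=\frac{2c(a-1)(b-1)\sqrt{ab+1}}{\sqrt{(c+1)(abc+1)}+\sqrt{(ac+1)(bc+1)}} .
\]
Its denominator lies between $2c\sqrt{ab}$ and $2c\sqrt{ab+1}$; the upper bound is where $c>4ab$ enters. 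Hence
\[
(a-1)(b-1)<M<\bigl(1+\tfrac{1}{2ab}\bigr)(a-1)(b-1)<(a-1)(b-1)+\tfrac12,
\]
which is impossible for an integer. This squeeze treats regular and irregular $c$ alike. Your squeeze on $abc_++1$ only covers $c=c_+$, which is why your plan needed Step~1 at all.
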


\section{Experiment and proof of Theorem \ref{thm:1}}

We started our investigation with a numerical search of exotic Diophantine triples $\{a,b,c\}$.
The following two examples turned out to be particularly relevant: $$\{8, 312/529, 495/529\}, \{312/529, -152880/165649, -78374557/87628321\}.$$

To understand these examples, we introduce the concept of regularity (see \cite{DKP-reg,DP2}).

\begin{definition}
	The triple $(a,b,c)\in \Q^3$ is called {\it regular} if $r_3(a,b,c)=0$ where $$r_3(a,b,c)=(a+b-c)^2-4(ab+1).$$
	Similarly, the quadruple $(a,b,c,d)\in \Q^4$ is called regular if $r_4(a,b,c,d)=0$ where
	$$r_4(a,b,c,d)=(a+b-c-d)^2-4(ab+1)(cd+1).$$ 
	Note that polynomials $r_3$ and $r_4$ are symmetric.
\end{definition}

Our first observation was that for almost half of the exotic triples we found (for $24$ out of $56$), the quadruple \(\{1,a,b,c\}\) is a regular Diophantine quadruple, so we decided to focus on this case.

The next difficulty was to find an elegant algebraic formulation of the condition that \(abc+1\) is a perfect square. The key observation is that, for every exotic triple, the set \(\{1,ab,c\}\) is a Diophantine triple. Moreover, if this triple is regular, as in the two examples above, then \(r_3(1,ab,c)=0\), giving an algebraic condition that forces  \(abc+1\) to be a perfect square.

Conversely, the symmetry of the polynomial \(r_3\) implies that whenever a triple \(\{a,b,c\}\) satisfies \(r_3(1,ab,c)=0\), the quantities \(ab+1\), \(abc+1\), and \(c+1\) are automatically perfect squares. It is also easy to see from the symmetries of the polynomial \(r_4\) that if the same triple of distinct nonzero rationals satisfies \(r_4(1,a,b,c)=0\), then the only conditions missing for \(\{a,b,c\}\) to be an exotic Diophantine triple are that \(a+1\) and \(b+1\) are perfect squares. This suggests setting
\[
a=r^2-1,\qquad b=s^2-1,
\]
and hence studying the system of equations
\[
r_3(1,(r^2-1)(s^2-1),c)=0,\qquad r_4(1,r^2-1,s^2-1,c)=0.
\]

The difference of these two polynomials factors as
\[
(rs-1)(rs+1)\bigl(r^2s^2-2s^2-2r^2+5+2c\bigr)=0.
\]

If \((rs-1)(rs+1)=0\), then substituting \(r=\pm 1/s\) into
\[
r_4(1,r^2-1,s^2-1,c)=0
\]
gives the genus one curve
\[
s^8 + 2s^6c - 2s^6 + s^4c^2 - 6s^4c - s^4 + 2s^2c - 2s^2 + 1=0,
\]
which is birationally equivalent to the elliptic curve
\[
E_1:\ y^2=x^3-7x-6.
\]
Since the Mordell--Weil rank of \(E_1\) over \(\Q\) is zero, this case does not yield infinitely many solutions.

In the second case, substituting
\[
c=\frac{-r^2s^2+2s^2+2r^2-5}{2}
\]
into
\[
r_4(1,r^2-1,s^2-1,c)=0
\]
leads to the two genus one curves
\[
r^2s^2-\frac{4}{3}r^2-\frac{2}{3}rs-\frac{4}{3}s^2+\frac{7}{3}=0
\]
and
\[
r^2s^2-\frac{4}{3}r^2+\frac{2}{3}rs-\frac{4}{3}s^2+\frac{7}{3}=0.
\]
These curves are birationally equivalent to the elliptic curve
\[
E_2:\ y^2=x^3-111x+450,
\]
whose Mordell--Weil group over \(\Q\) is generated by the point \([6,0]\) of order two and the point \([3,-12]\) of infinite order.

Thus, every point $(x,y)$ on $E_2$ gives an exotic Diophantine triple $(r^2-1,s^2-1,c)$ (provided the resulting
triple is nondegenerate) where

\[
u=\frac{2y-6x+42}{x^2-6x-3},
\]
\[
s=\frac{x^2-12x+39+2y}{x^2-6x-3},
\]
\[
r=\frac{(x-3)u^2+7u-1}{3u^2+6u-1},
\]
and
\[
c=\frac{-r^2s^2+2s^2+2r^2-5}{2}.
\]
Note that points $P$ and $P+[6,0]$ on $E_2(\Q)$ give the same exotic triple.

The two examples at the beginning of this section correspond to the points \([7,4]=[6,0]-[3,-12]\) and \([-6,-30]=[6,0]-2\cdot[3,-12]\) on \(E_2(\Q)\).
For more examples, one can check that the points
\[
\left(\frac{223}{9},-\frac{3068}{27}\right),\qquad
\left(\frac{13206}{2401},\frac{285060}{117649}\right),\qquad
\left(\frac{2746063}{444889},\frac{45138812}{296740963}\right)
\]
on \(E_2(\Q)\) correspond to the triples
\[
\left\{
\frac{724255280}{736742449},
-\frac{152880}{165649},
-\frac{63009087694401}{122040649934401}
\right\},
\]
\[
\left\{
\frac{24490915482072}{12448992625969},
\frac{724255280}{736742449},
-\frac{4510665894525110607837}{9171701314839342058081}
\right\},
\]
and
\[
\left\{
-\frac{2539564321528123032}{5054545907282329441},
\frac{24490915482072}{12448992625969},
\frac{14261842404349331345950974819695}{62924004727379507987985949853329}
\right\},
\]
respectively.

\section{Proof of Theorem \ref{thm:2}}
\begin{proof}[Proof of Theorem \ref{thm:2}]
Assume for contradiction that such integers exist. Since their increments by $1$ are perfect squares and they are nonzero, we may order them without loss of generality as $3 \leq a < b < c$ (the lower bound $3$ follows from $a+1$ being a square). Let $ab+1 = r^2$, $ac+1 = s^2$, $bc+1 = t^2$, $c+1 = z^2$, and $abc+1 = u^2$ for positive integers $r, s, t, z, u$. The set $\{1, a, b, c\}$ forms an integer Diophantine quadruple, so Lemma 14 from \cite{duje_abs} implies that $c >  4ab$.

Define the integer $M := 2r(zu - st)$. We first establish the identity

$$ (zu)^2 - (st)^2 = (c+1)(abc+1) - (ac+1)(bc+1) = c(a-1)(b-1), $$
which yields $zu - st = \frac{c(a-1)(b-1)}{zu + st}$, and consequently $M = \frac{2rc(a-1)(b-1)}{zu+st}$.

Next, we bound $zu+st$. For the lower bound, $zu+st > \sqrt{c \cdot abc} + \sqrt{ac \cdot bc} = 2c\sqrt{ab}$. For the upper bound, observe that since $c > 4ab \ge 12$, we have $(rc)^2 = abc^2+c^2 > \max((zu)^2, (st)^2)$, which implies $zu < rc$ and $st < rc$, yielding $zu+st < 2rc$.

Substituting these bounds into the expression for $M$, we obtain

$$ (a-1)(b-1) < M < \frac{2rc(a-1)(b-1)}{2c\sqrt{ab}} = \frac{r}{\sqrt{ab}}(a-1)(b-1). $$

Since $\frac{r}{\sqrt{ab}} = \sqrt{1 + \frac{1}{ab}} < 1 + \frac{1}{2ab}$, we find

$$ (a-1)(b-1) < M < \left(1 + \frac{1}{2ab}\right)(a-1)(b-1) < (a-1)(b-1) + \frac{1}{2}. $$

This places the integer $M$ strictly between $(a-1)(b-1)$ and $(a-1)(b-1) + 1$, which is a contradiction. Therefore, no such integers $a, b, c$ can exist.
\end{proof}

{\bf Acknowledgements.}
The authors wish to thank Tomislav Pejković for suggesting this problem. They also acknowledge the assistance of the Cantab Pi AI Math Research Assistant, which contributed to finding the proof of Theorem \ref{thm:2}.

The authors were supported by the Croatian Science Foundation under the project no.\ IP-2022-10-5008 (TEBAG). The authors acknowledge support from the project “Implementation of cutting-edge research and its application as part of the Scientific Center of Excellence for Quantum and Complex Systems, and Representations of Lie Algebras”, Grant No.\ PK.1.1.10.0004, co-financed by the European Union through the European Regional Development Fund -- Competitiveness and Cohesion Programme 2021-2027. This research was funded by the European union: NextGenerationEU (Digital Innovations in Teaching Foundational Courses for Engineers of the Future 581-UNIOS-39), and NextGenerationEU through the National Recovery and Resilience Plan 2021-2026. Institutional grant of University of Zagreb Faculty of Science (IK IA 1.1.3. Impact4Math).

\end{document}